\DeclareMathOperator{\tw}{tw}
\newtheorem{lemma}{Lemma}
\newtheorem{theorem}{Theorem}
\newtheorem{claim}{Claim}
\begin{document}
\title{A simple proof of the tree-width duality theorem}

\author{Fr\'ed\'eric Mazoit}%
\email{Frederic.Mazoit@labri.fr}
\thanks{This research was supported by the french ANR project DORSO.}
\address{LaBRI, Universit\'e de Bordeaux\\
  351 cours de la libération, F-33405 Talence CEDEX, France.}

\begin{abstract}
  We give a simple proof of the ``tree-width duality theorem'' of
  Seymour and Thomas that the tree-width of a finite graph is exactly
  one less than the largest order of its brambles.
\end{abstract}
\maketitle

\section{Introduction}
A \emph{tree-decomposition} $\mathcal{T}=(T, l)$ of a graph $G=(V, E)$
is tree whose nodes are labelled in such a way that
\begin{enumerate}[i.]
\item $V=\bigcup_{t\in V(T)}l(t)$;
\item every $e\in E$ is contained in at least one $l(t)$;
\item for every vertex $v\in V$, the nodes of $T$ whose bags contain
  $v$ induce a connected subtree of $T$.
\end{enumerate}
The label of a node is its \emph{bag}.  The \emph{width} of
$\mathcal{T}$ is $\max\{|l(t)|\;;\; t\in V(T)\}-1$, and the
\emph{tree-width} $\tw(G)$ of $G$ is the least width of any of its
tree-decomposition.

Two subsets $X$ and $Y$ of $V$ \emph{touch} if they meet or if there
exists an edge linking them.  A set $\mathcal{B}$ of mutually touching
connected vertex sets in $G$ is a \emph{bramble}.  A \emph{cover} of
$\mathcal{B}$ is a set of vertices which meets all its elements, and
the \emph{order} of $\mathcal{B}$ is the least size of one of its
covers.

In this note, we give a new proof of the following theorem of Seymour
and Thomas which Reed~\cite{Re97a} calls the ``tree-width duality
theorem''.
\begin{theorem}[\cite{SeTh93a}]
  Let $k\geq 0$ be an integer. A graph has tree-width $\geq k$ if and
  only if it contains a bramble of order $>k$.
\end{theorem}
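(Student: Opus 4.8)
The plan is to prove the two implications of the biconditional separately. The implication ``$G$ has a bramble of order $>k$ $\Rightarrow$ $\tw(G)\ge k$'' is short; the converse, which I would handle by contraposition, carries the weight.

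\emph{From brambles to tree-width.} Let $\mathcal{B}$ be a bramble of order $>k$ and let $\mathcal{T}=(T,l)$ be any tree-decomposition of $G$; the goal is to exhibit a bag of size $\ge k+1$. For $B\in\mathcal{B}$ set $T_B=\{t\in V(T)\;;\;l(t)\cap B\neq\emptyset\}$. Because $B$ is connected and, by axioms (ii) and (iii), the nodes whose bags contain a fixed vertex form a subtree while the two ends of an edge of $G[B]$ lie together in some bag, $T_B$ is a nonempty subtree of $T$. If $B$ and $B'$ touch, then either a common vertex lies in a common bag or, by (ii), an edge between them does; in either case $T_B\cap T_{B'}\neq\emptyset$. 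Thus $\{T_B\;;\;B\in\mathcal{B}\}$ is a finite family of pairwise intersecting subtrees of a tree, so by the Helly property of trees it has a common node $t$. Then $l(t)$ meets every member of $\mathcal{B}$, i.e.\ $l(t)$ is a cover, whence $|l(t)|\ge k+1$ and $\mathcal{T}$ has width $\ge k$.

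\emph{From no large bramble to a narrow decomposition.} Here I would assume that every bramble of $G$ has order $\le k$ and build a tree-decomposition of $G$ of width $\le k-1$. The method is divide-and-conquer: cut $G$ along a small separation, decompose the sides recursively, and glue the partial decompositions along their shared separator. To make the recursion well founded the pieces handed to the recursive calls must be genuinely smaller, and to make the glued object a valid tree-decomposition the shared separator must be collected into a single bag on each side --- for instance by working, on each side, with the subgraph on a component $C$ of $G$ minus the separator together with $N(C)$, with $N(C)$ made into a clique. The whole construction then rests on one local assertion: at each step one can choose a separator along which to cut so that (a) the sides are strictly smaller and (b) each side again has no bramble of order $>k$, so that the induction hypothesis applies to it.

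\emph{The obstacle.} Proving that local assertion is the crux, and I expect it to be by far the hardest part. The route is contrapositive: if, in some subproblem, \emph{no} admissible separator exists, one must exhibit a bramble of order $>k$ there and contradict the hypothesis. Getting the candidate sets to touch pairwise should be the easy half --- the sides $C$ being connected, and every vertex of $N(C)$ seeing $C$, any adjacency one needs can be routed through $C$. The subtle half is keeping the order above $k$: one must set up the subproblems and the candidate bramble so that a cover of it by $\le k$ vertices would itself furnish an admissible separator, contradicting its non-existence, and so that the rerouting used to obtain touching does not quietly drop the order. I would expect this to need an extremal choice --- of the separator, or of a minimal counterexample --- together with a carefully tuned notion of what the recursive subproblems are, and that is where essentially all of the work of the proof should concentrate.
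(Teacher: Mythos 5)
Your first direction is fine: the Helly-property argument for pairwise intersecting subtrees of $T$ is complete and correct, and it is essentially the same content as the paper's proof, which reaches the covering bag by orienting each tree edge $t_1t_2$ towards the side containing the bramble elements that avoid $l(t_1)\cap l(t_2)$ and taking the end of a maximal directed path.

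The forward direction, however, is not a proof but a plan, and the step you yourself label ``the obstacle'' is exactly where the theorem lives. Two concrete problems. First, your induction hypothesis is not obviously inherited: when you pass to a side $G[C\cup N(C)]$ with $N(C)$ turned into a clique, a bramble of order $>k$ in that smaller graph is not a bramble of $G$, so the contradiction you want (``no admissible separator yields a large bramble, contradicting the hypothesis'') does not follow without substantial extra machinery; this is precisely why Seymour--Thomas and Diestel do not induct on ``no bramble of order $>k$'' but prove a stronger statement about $\mathcal{B}$-admissible decompositions by a reverse induction over brambles --- the part the paper explicitly sets out to avoid. Second, even granting a correct formulation of the subproblems, the extremal choice and the argument that a $\le k$-cover of your candidate bramble would furnish an admissible separator is the entire mathematical content of the hard direction, and you leave it unproved. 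For comparison, the paper sidesteps recursion on subgraphs altogether: it works with partial $(<k)$-decompositions of $G$ itself and their $k$-flaps, takes an inclusion-minimal upward-closed family $\mathcal{B}$ of flaps meeting every partial $(<k)$-decomposition, and shows that its elements pairwise touch via a gluing lemma (Lemma~1): if flaps $X$, $Y$ of two partial decompositions did not touch, a minimum separator $S$ between them, together with $|S|$ disjoint paths from $X$ to $S$ used to pull $S$ into the bags without enlarging them, lets one glue the two decompositions into a partial $(<k)$-decomposition none of whose flaps lies in $\mathcal{B}$, contradicting minimality. The connected members of $\mathcal{B}$ then form the bramble, and its order exceeds $k$ because for any $S$ with $|S|\le k$ the star decomposition from $S$ has a flap in $\mathcal{B}$ disjoint from $S$. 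In short: your easy half stands, but the hard half is an outline whose crucial lemma is missing, and the naive form of its inductive hypothesis would itself need repair.
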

Although our proof is quite short, our goal is not to give a shorter
proof.  The proof in~\cite{Di05b} is already short enough.  Instead,
we claim that our proof is much simpler than previous ones.  Indeed,
the proofs in~\cite{SeTh93a,Di05b} rely on a reverse induction on the
size of a bramble which is not very enlightening.  A new conceptually
much simpler proof appeared in~\cite{LyMaTh10a} but this proof is a
much more general result on sets of partitions which through a
translation process unifies all known duality theorem of this kind
such as the branch-width/tangle or the path-width blockade Theorems.
We turn this more general proof back into a specific proof for
tree-width which we believe is interesting both as an introduction to
the framework of~\cite{AmMaNiTh09a, LyMaTh10a}, and to a reader which
does not want to dwell into this framework but still want to have a
better understanding of the tree-width duality Theorem.

\section{The proof}
So let $G=(V, E)$ be a graph and let $k$ be a fixed integer.  A bag of
a tree-decomposition of $G$ is \emph{small} if it has size $\leq k$
and is \emph{big} otherwise.  A \emph{partial $(<k)$-decomposition} is
a tree-decomposition $\mathcal{T}$ with no big internal bag and with
at least one small bag.  Obviously, if all its bags are small, then
$\mathcal{T}$ is a tree-decomposition of width $<k$.  If not, it
contains a big leaf bag and the neighbouring bag $l(u)$ of any such
big leaf bag $l(t)$ is small.  The nonempty set $l(t)-l(u)$ is a
\emph{$k$-flap} of $\mathcal{T}$.

Now suppose that $X$ and $Y$ are respectively $k$-flaps of some
partial $(<k)$-decompositions $(T_X, l_X)$ and $(T_Y, l_Y)$, and that
$S=N(X)\subseteq N(Y)$.  Then by identifying the leaves of the two
decompositions which respectively contains $X$ and $Y$ and relabelling
this node $S$, then we obtain a new ``better'' partial
$(<k)$-decomposition.

This gluing process is quite powerful.  Indeed let $S\subseteq V$ have
size $\leq k$ and let $C_1$,~\dots, $C_p$ be the components of $G-S$.
The star whose centre $u$ is labelled $l(u)=S$ and whose $p$ leaves
$v_1$,~\dots,~$v_p$ are labelled by $l(v_i)=C_i\cup N(C_i)$ is a
partial $(<k)$-decomposition which we call the \emph{star
  decomposition from $S$}.  It can be shown that if $\tw(G)<k$, then
an optimal tree-decomposition can always be obtained by repeatedly
applying this gluing process from star decompositions from sets of
size $\leq k$.  But this process is not powerful enough for our
purpose.  We need the following lemma.
\begin{lemma}\label{lem}
  Let $X$ and $Y$ be respectively $k$-flaps of some partial
  $(<k)$-decompositions $(T_X, l_X)$ and $(T_Y, l_Y)$ of some graph
  $G=(V, E)$.  If $X$ and $Y$ do not touch, then there exists a
  partial $(<k)$-decomposition $(T, l)$ whose $k$-flaps are subsets of
  $k$-flaps of $(T_X, l_X)$ and $(T_Y, l_Y)$ other than $X$ and $Y$.
\end{lemma}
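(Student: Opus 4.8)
The plan is to obtain $(T,l)$ by surgery on $(T_X,l_X)$: excise the big leaf $t_X$ and graft in its place a decomposition of $G[N[X]]$ that is extracted from $(T_Y,l_Y)$. First I would record the elementary facts. Writing $S=N(X)$, the definition of a flap gives $S\subseteq l_X(t_X)\cap l_X(u_X)\subseteq l_X(u_X)$, so $S$ lies in a small bag of $(T_X,l_X)$ and in particular $|S|\le k$; likewise $|N(Y)|\le k$. Since $X$ and $Y$ do not touch, the four sets $X,Y,N(X),N(Y)$ sit in general position: $X\cap Y=\emptyset$, $X\cap N(Y)=\emptyset$ and $Y\cap N(X)=\emptyset$, so $N[X]$ avoids $Y$ and $N[Y]$ avoids $X$. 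The key consequence is this: deleting the leaf $t_Y$ from $(T_Y,l_Y)$ leaves a tree-decomposition of $G-Y$ which nevertheless covers every vertex of $N[X]$ and every edge of $G[N[X]]$ — an edge covered only by $l_Y(t_Y)$ has an endpoint in $Y$, and $N[X]$ misses $Y$ — so its restriction to $N[X]$ is a tree-decomposition $B$ of $G[N[X]]$; symmetrically for $X$. Moreover every internal bag of $B$ comes from an internal bag of $T_Y$ or from $l_Y(u_Y)$, hence is small, and the big leaves of $B$ are restrictions of big leaves of $(T_Y,l_Y)$ other than $t_Y$.

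Next, the construction. I would first trim, shrinking $l_X(t_X)$ to $N[X]=X\cup N(X)$ — legitimate because every discarded vertex lies in $l_X(u_X)$ and has no neighbour in $X$ — and do the same in $(T_Y,l_Y)$. Now delete $t_X$ to get a tree-decomposition $A$ of $G-X$ whose bag $l_X(u_X)$ contains $N(X)$, take the decomposition $B$ of $G[N[X]]$ above, arranged so that $N(X)$ lies inside one of its small bags, and glue $A$ and $B$ by joining that bag of $B$ to $u_X$. One then checks routinely that $(T,l)$ is a tree-decomposition of $G$ — the vertices common to $A$ and $B$ are exactly those of $N(X)$, and each occurs in the glued bag on both sides, so condition (iii) survives — that all its internal bags are small, and that its big leaves are those of $A$ together with those of $B$. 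Hence every $k$-flap of $(T,l)$ is a subset of a $k$-flap of $(T_X,l_X)$ other than $X$ (the flap $X$ left with $t_X$) or of a $k$-flap of $(T_Y,l_Y)$ other than $Y$ (the flap $Y$ left because $N[X]$, being disjoint from $Y$, shrinks the bag of $t_Y$ to a small one).

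The delicate clause — and the reason the lemma has real content — is ``arranged so that $N(X)$ lies inside one small bag of $B$''. A tree-decomposition of $G[N[X]]$ obtained just by restriction from $(T_Y,l_Y)$ need not have $N(X)$ inside any single bag, and forcing it there can push a bag above size $k$. Resolving this is, I expect, the technical heart of the proof; it is where the hypothesis that $Y$ too is a $k$-flap of a partial $(<k)$-decomposition must be used, so that the two decompositions can be played off against each other: when the surgery on the $X$-side is blocked one performs the symmetric surgery on $(T_Y,l_Y)$ along $N(Y)$ instead. When $X$ is disconnected one should also replace it by the connected components it induces in $G-N(X)$ — each with neighbourhood inside $N(X)$ — and graft these in one at a time. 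I would expect the cleanest way to package this compatibility to be the partition language of~\cite{LyMaTh10a}, in which it is built into the framework, and translating it back to tree-decompositions is presumably what this note does.
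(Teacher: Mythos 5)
Your surgery plan has essentially the right shape --- cut off the big leaf carrying $X$, and glue onto the remaining decomposition a piece extracted from $(T_Y,l_Y)$ that covers the $X$-side, so that the flaps $X$ and $Y$ disappear while every remaining flap only shrinks --- but the step you yourself flag as ``the delicate clause'' is exactly the content of the lemma, and your proposal leaves it unresolved. Gluing along $S=N(X)$ cannot work in general: no bag of the restriction of $(T_Y,l_Y)$ need contain $N(X)$, and forcing $N(X)$ into a bag can make that bag big. Switching to the symmetric surgery along $N(Y)$ can be blocked for the same reason (both sides may be blocked simultaneously), and grafting the components of $X$ one at a time does not change this. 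So, as written, there is a genuine gap at the heart of the argument, and the hand-off to the partition framework of~\cite{LyMaTh10a} is not a substitute for the missing step.

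The paper closes this gap with two ideas absent from your sketch. First, it does not glue along $N(X)$ but along a set $S$ of \emph{minimum} size such that no component of $G-S$ meets both $X$ and $Y$; since $N(X)$ is one such set, $|S|\le k$. Second, by Menger's theorem this minimality yields $|S|$ vertex-disjoint paths $P_s$ from $X$ to $S$, which are used to \emph{modify} the decomposition rather than merely restrict it: with $A$ consisting of $S$ and the components of $G-S$ meeting $X$, and $B=(V-A)\cup S$, one sets $l'_X(t)=(l_X(t)\cap B)\cup\{s: t\ \text{on the path from }x\text{ to }t_s\}$, where $x$ is the leaf carrying $X$ and $t_s$ is any node with $s\in l_X(t_s)$. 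Since $P_s$ meets $B$ only in $s$ and the nodes whose bags meet $P_s$ form a subtree containing $x$, every bag that gains $s$ loses some other vertex of $P_s$, so no bag grows; moreover the leaf $x$ now carries exactly the small bag $S$, so every $k$-flap of $(T_X,l'_X)$ is contained in a $k$-flap of $(T_X,l_X)$ other than $X$. Doing the same from $(T_Y,l_Y)$ on $G[A]$ and identifying the two leaves labelled $S$ gives the desired partial $(<k)$-decomposition. It is this minimum-separator-plus-Menger argument that makes the gluing possible, and it is precisely what your proposal is missing.
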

\begin{proof}
  Since, $X$ and $Y$ no not touch, there exists $S\subseteq V$ such
  that no component of $G-S$ meet both $X$ and $Y$ (for example
  $N(X)$). Choose such an $S$ with $|S|$ minimal.  Note that $|S|\leq
  |N(X)|\leq k$.  Let $A$ contain $S$ and all the components of $G-S$
  which meet $X$, and let $B=(V-A)\cup S$.

  \begin{claim}
    There exists a partial $(<k)$-decomposition of $G[B]$ with $S$ as
    a leaf and whose $k$-flaps are subsets of the $k$-flaps of $(T_X,
    l_X)$ other than $X$.
  \end{claim}
  Let $x$ be the leaf of $T_X$ whose bag contains $X$.  Since $|S|$ is
  minimum, there exists $|S|$ vertex disjoint paths $P_s$ from $X$ to
  $S$ ($s\in S$).  Note that $P_s$ only meets $B$ in $s$.  For each
  $s\in S$, pick a node $t_s$ in $T_X$ with $s\in l_X(t_s)$, and let
  $l'_X(t)=(l_X(t)\cap B)\cup \{s| t\in \text{path from }x\text{ to
  }t_s\}$ for all $t\in T$.  Then $(T_X, l'_X)$ is the
  tree-decomposition of $G[B]$.  Indeed, since we removed only
  vertices not in $B$, every vertex and every edge of $G[B]$ is
  contained in some bag $l'_X(t)$.  Moreover, for any $v\notin S$,
  $l'_X(t)$ contains $v$ if and only if $l_X(t)$ does.  And $l'_X(t)$
  contains $s\in S$ if $l_X(t)$ does or if $t$ is on the path from $x$
  to $t_s$.  In either cases, the vertices $t\in V(T_X)$ whose bag
  $l'_X(t)$ contain a given vertex induce a subtree of $T_X$.

  Now the size of a bag $l'_X(t)$ is at most $|l_X(t)|$.  Indeed,
  since $P_s$ is a connected subgraph of $G$, it induces a connected
  subtree of $T_X$, and this subtree contains the path from $x$ to
  $t_s$. So for every vertex $s\in l'_X(t)\setminus l_X(t)$, there
  exists at least one other vertex of $P_s$ which as been removed.
  The decomposition $(T_X, l'_X)$ is thus indeed a partial
  $(<k)$-decomposition of $G[B]$.  It remains to prove that the
  $k$-flaps of $(T_X, l'_X)$ are contained in the $k$-flaps of $(T_X,
  l_X)$ other than $X$.  But by construction, the only leaf whose bag
  received new vertices is $x$ and $l'_X(x)=S$ which is small.  This
  finishes the proof of the claim.

  Let $(T_Y, l'_Y)$ be obtains in the same way for $G[A]$.  By
  identifying the leaves $x$ and $y$ of $T_X$ and $T_Y$, we obtain a
  partial $(<k)$-decomposition which satisfies the conditions of the
  lemma.
\end{proof}

We are now ready to prove the tree-width duality Theorem.
\begin{proof}
  For the backward implication, let $\mathcal{B}$ be a bramble of
  order $>k$ in a graph $G$.  We show that every tree-decomposition
  $(T, l)$ of $G$ has a part that covers $\mathcal{B}$, and thus
  $\mathcal{T}$ has width $\geq k$.

  We start by orienting the edges $t_1t_2$ of $T$.  Let $T_i$ be the
  component of $T\setminus t_1t_2$ which contains $t_i$ and let
  $V_i=\cup_{t\in V(T_i)}l(t)$.  If $X:=l(t_1)\cap l(t_2)$ covers
  $\mathcal{B}$, we are done. If not, then because they are connected,
  each $B\in\mathcal{B}$ disjoint from $X$ in contained is some
  $B\subseteq V_i$.  This $i$ is the same for all such $B$, because
  they touch.  We now orient the edge $t_1t_2$ towards $t_i$.  If
  every edge of $T$ is oriented in this way and $t$ is the last vertex
  of a maximal directed path in $T$, then $l(t)$ covers $\mathcal{B}$.

  To prove the forward direction, we now assume that $G$ has
  tree-width $\geq k$, then any partial $(<k)$-decomposition contains
  a $k$-flap.  There thus exists a set $\mathcal{B}$ of $k$-flaps such
  that
  \begin{enumerate}[(i)]
  \item $\mathcal{B}$ contains a flap of every partial
    $(<k)$-decomposition;

  \item $\mathcal{B}$ is upward closed, that is if $C\in\mathcal{B}$
    and $D\supseteq C$ is a $k$-flap, then $D\in\mathcal{B}$.
  \end{enumerate}
  So far, the set of all $k$-flaps satisfies $(i)$ and $(ii)$.

  \begin{enumerate}
  \item[(iii)] Subject to $(i)$ and $(ii)$, $\mathcal{B}$ is
    inclusion-wise minimal.
  \end{enumerate}
  The set $\mathcal{B}$ may not be a bramble because it may contain
  non-connected elements but we claim that the set $\mathcal{B}'$
  which contains the connected elements of $\mathcal{B}$ is a bramble
  of order $\geq k$.  Obviously, its elements are connected.  To see
  that its order is $> k$, let $S\subseteq V$ have size $\leq k$.
  Then $\mathcal{B}'$ contains a $k$-flap of the star-decomposition
  from $S$, and $S$ is thus not a covering of $\mathcal{B}'$.

  We now prove that the elements of $\mathcal{B}$ pairwise touch,
  which finishes the proof that $\mathcal{B}'$ is a bramble.  Suppose
  not, then let $X$ and $Y\in\mathcal{B}$ witness this.  Obviously, no
  subsets of $X$ and $Y$ can touch so let us suppose that they are
  inclusion-wise minimal in $\mathcal{B}$.  The set $X$ being minimal,
  $\mathcal{B}\setminus\{X\}$ is still upward closed and is a strict
  subset of $\mathcal{B}$.  There thus exists at least one partial
  $(<k)$-decomposition $(T_X, l_X)$ whose only flap in $\mathcal{B}$
  is $X$.  Likewise, let $(T_Y, l_Y)$ have only $Y$ as a flap in
  $\mathcal{B}$.  Let $(T, l)$ be the partial $(<k)$-decomposition
  satisfying the conditions of Lemma~\ref{lem}.  Since $\mathcal{B}$
  is upward closed and contains no $k$-flap of $(T_X, l_X)$ and $(T_Y,
  l_Y)$ other than $X$ and $Y$, it contains no $k$-flap of $(T, l)$, a
  contradiction.
\end{proof}

\end{document}